
\documentclass[12pt]{article}

\usepackage{setspace}
\usepackage{amsmath, amssymb,amsthm, amsfonts,listings}
\newtheorem{thm}{Theorem}

\newtheorem{prop}{Proposition}

\newtheorem{propo-thm}{Proposed Theorem}
\newtheorem{propo-prop}{Proposed Proposition}
\newtheorem{propo-cor}{Proposed Corollary}

\newcommand{\Ric}{\mbox{Ric}}
\newcommand{\R}{\mathbb R}
\theoremstyle{remark}
\newtheorem{remark}[thm]{Remark}

\numberwithin{equation}{section}

\newcommand{\be}{\begin{equation}}
\newcommand{\ee}{\end{equation}}
\def\p{\partial}

\def\la{\langle}
\def\ra{\rangle}
\def\lf{\left}
\def\ri{\right}
\def\Pi{\displaystyle{\mathbb{II}}}

\def\mS{\mathbb{S}}
\def\V{\mathrm{Vol}}

\begin{document}

\title{A New Monotone Quantity along the Inverse Mean Curvature Flow in $\R^n$}

\author{
Kwok-Kun Kwong\thanks{%
School of Mathematical Sciences, Monash University, Victoria 3800, Australia.
E-mail: kwok-kun.kwong@monash.edu}
\and
Pengzi Miao\thanks{%
Department of Mathematics, University of Miami, Coral Gables, FL 33146, USA.
E-mail: pengzim@math.miami.edu}
}

\date{}

\maketitle

\begin{abstract}
We find  a new monotone increasing quantity along smooth solutions to the inverse mean curvature flow in $ \R^n$.
As an application, we derive a sharp geometric inequality for mean convex, star-shaped hypersurfaces which relates
the volume enclosed by a hypersurface to a weighted total mean curvature of the  hypersurface.
\end{abstract}

%\noindent
%{Keywords:}  {inverse mean curvature flow} \\
%{MSC 2010:}  {primary 53C44; secondary 53A07}

\section{Statement of the Result}

Monotone quantities along hypersurfaces evolving under the inverse mean flow have many
applications in  geometry and relativity.
In \cite{HI}, Huisken and Ilmanen applied the monotone increasing property  of Hawking mass to give a proof of
the Riemannian Penrose Inequality. In a recent paper \cite{BHW}, Brendle, Hung and Wang discovered a monotone decreasing
quantity along the inverse mean curvature flow  in  Anti-Desitter-Schwarzschild manifolds and used it to
establish a Minkowski-type inequality for star-shaped hypersurfaces.

In this note, we provide  a new monotone increasing quantity along smooth solutions to the inverse mean curvature flow in
$ \R^n$:

\begin{thm} \label{thm: monotone}
Let $ \Sigma $ be a smooth, closed, embedded hypersurface with positive mean curvature in $ \R^n$.
Let $ I $ be an open interval and
$ X: \Sigma \times I \rightarrow \R^n $ be a smooth map satisfying
\be \label{eq-IMF}
 \frac{\p X}{\p t} = \frac{1}{H} \nu,
 \ee
where $H$ is the mean curvature of the surface $ \Sigma_t = X (\Sigma, t) $ and $ \nu $ is the outward unit normal vector to $ \Sigma_t$.
Let $ \Omega_t $ be the bounded region enclosed by $ \Sigma_t $ and
 $ r = r (x)$ be the distance from $ x $ to a fixed point $ O$. Then the function
\begin{equation}\label{eq-claim-Rn}
Q (t) = e^{ - \frac{n-2}{n-1} t} \lf[ n \mathrm{Vol}(\Omega_t) - \frac{1}{n-1} \int_{\Sigma_t} r^2 H d \mu  \ri]
\end{equation}
is monotone increasing and $Q(t) $ is  a constant function if and only if  $\Sigma_t $ is a round sphere for each $t$.
Here $ \V(\Omega)$ denotes  the volume of a bounded region $ \Omega $ and $ d \mu $ denotes the volume form on a hypersurface.
\end{thm}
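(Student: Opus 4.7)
The plan is to differentiate $Q(t)$ directly, substitute the standard evolution equations under IMCF, and rearrange so that $Q'(t)$ becomes a manifestly nonnegative sum of two pieces. The main ingredients are: (i) the evolution identities $\p_t d\mu_t = d\mu_t$ and $\p_t H = -\Delta_{\Sigma_t}(H^{-1}) - |A|^2 H^{-1}$ for the flow $\p_t X = H^{-1}\nu$; (ii) the ambient-to-intrinsic identity $\Delta_{\Sigma_t}(r^2) = 2(n-1) - 2H\la X, \nu\ra$ (writing $X = x-O$), valid because the ambient Hessian of $\tfrac{1}{2}r^2$ is the Euclidean metric; and (iii) the divergence theorem $\int_{\Sigma_t}\la X,\nu\ra\,d\mu = n\V(\Omega_t)$.

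First I would record $\frac{d}{dt}\V(\Omega_t) = \int_{\Sigma_t} H^{-1}d\mu$, and then expand $\frac{d}{dt}\int_{\Sigma_t} r^2 H\,d\mu$ into three pieces. The contribution from $\p_t d\mu$ reproduces $\int r^2 H\,d\mu$; the contribution from $\p_t(r^2)\cdot H = 2\la X,\nu\ra$ integrates to $2n\V(\Omega_t)$; and the contribution from $\p_t H$, after integrating by parts on the closed surface $\Sigma_t$ and substituting the Hessian identity above, becomes $-2(n-1)\int H^{-1}d\mu + 2n\V(\Omega_t) - \int r^2|A|^2 H^{-1}d\mu$. Summing gives
\begin{equation*}
\frac{d}{dt}\int_{\Sigma_t} r^2 H\,d\mu = 4n\V(\Omega_t) - 2(n-1)\int_{\Sigma_t}\frac{d\mu}{H} - \int_{\Sigma_t}\frac{r^2|A|^2}{H}d\mu + \int_{\Sigma_t} r^2 H\,d\mu.
\end{equation*}

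I expect the exponential factor $e^{-(n-2)t/(n-1)}$ in the definition of $Q$ to be precisely the choice that cancels the remaining indefinite cross-terms after substitution, leaving the clean decomposition
\begin{equation*}
e^{\frac{n-2}{n-1}t}\,Q'(t) = (n+2)\lf(\int_{\Sigma_t}\frac{d\mu}{H} - \frac{n\V(\Omega_t)}{n-1}\ri) + \frac{1}{n-1}\int_{\Sigma_t}\frac{r^2}{H}\lf(|A|^2 - \frac{H^2}{n-1}\ri) d\mu.
\end{equation*}
The first parenthesis is nonnegative by the Heintze--Karcher inequality for mean convex domains in $\R^n$, while the second integrand is pointwise nonnegative by the Cauchy--Schwarz inequality $|A|^2 \geq H^2/(n-1)$. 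Equality in the second term forces $\Sigma_t$ to be totally umbilical, hence a round sphere; this is also the equality case of Heintze--Karcher. Thus $Q$ is constant precisely when each $\Sigma_t$ is a round sphere.

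The main obstacle is bookkeeping: one has to execute the integration by parts cleanly and then track every coefficient to confirm that the specific exponent $(n-2)/(n-1)$ is exactly the value for which the nonsign-definite cross-terms cancel, and that the residual coefficients ($n+2$ and $1/(n-1)$) come out positive. The hypothesis $H>0$ is needed both to make sense of the flow and to invoke Heintze--Karcher; it is preserved along smooth solutions of IMCF.
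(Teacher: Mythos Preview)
Your proposal is correct and follows essentially the same route as the paper: the same evolution identities, the same intrinsic Laplacian formula for $r^2$, the traceless inequality $|A|^2\ge H^2/(n-1)$, and the Heintze--Karcher/Ros inequality $(n-1)\int_{\Sigma_t}H^{-1}\,d\mu\ge n\,\V(\Omega_t)$. The only cosmetic difference is packaging: the paper applies the two inequalities sequentially to obtain a differential inequality for $n(n-1)\V(\Omega_t)-\int_{\Sigma_t}r^2H\,d\mu$, whereas you compute $e^{\frac{n-2}{n-1}t}Q'(t)$ as an exact identity and then read off the two nonnegative pieces (your displayed decomposition with coefficients $n+2$ and $1/(n-1)$ is indeed correct).
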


As an application, we derive a sharp inequality for star-shaped hypersurfaces in $ \R^n$ which relates
the volume enclosed by a hypersurface to an $r^2$-weighted total mean curvature of the hypersurface.

\begin{thm}\label{thm: 2}
  Let $\Sigma  $ be a smooth,  star-shaped, closed hypersurface embedded in $\mathbb R^n$ with positive mean curvature.
    Then
  \begin{equation}\label{ineq: main}
   n \mathrm{Vol}(\Omega)\le \frac 1{n-1}\int_\Sigma r^2 H d \mu
   \end{equation}
 where $ \V(\Omega)$ is the volume of the region $\Omega$ enclosed by $\Sigma $,
 $r $ is the distance  to a  fixed point $O $  and $H$ is the mean curvature of $ \Sigma$.
Furthermore,  equality in \eqref{ineq: main} holds if and only if $\Sigma$ is a sphere centered at $O$.
\end{thm}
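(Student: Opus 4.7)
The plan is to apply Theorem \ref{thm: monotone} along the inverse mean curvature flow starting from $\Sigma$. Since $\Sigma$ is star-shaped with positive mean curvature, the classical long-time existence theory of Gerhardt and Urbas produces a smooth solution $\{\Sigma_t\}_{t\in[0,\infty)}$ of \eqref{eq-IMF} with $\Sigma_0=\Sigma$, each $\Sigma_t$ remaining smooth, star-shaped, and mean convex. Theorem \ref{thm: monotone} then yields $Q(0) \le \lim_{t\to\infty} Q(t)$. Since $Q(0) = n\V(\Omega) - \frac{1}{n-1}\int_\Sigma r^2 H \,d\mu$, the desired inequality \eqref{ineq: main} is precisely $Q(0) \le 0$; it therefore suffices to show $\lim_{t\to\infty} Q(t) \le 0$.

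The second ingredient is an explicit sphere calculation. Placing $O$ at the origin of $\R^n$, for a round sphere $S_R(p)$ of radius $R$ centered at $p \in \R^n$, the identity $r^2 = R^2 + 2R\langle \nu,p\rangle + |p|^2$ together with $H \equiv (n-1)/R$ and $\int_{S_R(p)} \nu\,d\mu = 0$ gives
\[
n\V(B_R(p)) - \frac{1}{n-1}\int_{S_R(p)} r^2 H \,d\mu = -|\mS^{n-1}|\, R^{n-2}\, |p|^2,
\]
which is $\le 0$, with equality iff $p = O$. The third ingredient is the Gerhardt--Urbas asymptotic theorem: after rescaling by $e^{-t/(n-1)}$ from an appropriate star-shape center $O_\star$, the surfaces $\Sigma_t$ converge smoothly to a round sphere of radius $c = (|\Sigma|/|\mS^{n-1}|)^{1/(n-1)}$ centered at $O_\star$. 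Equivalently, $\Sigma_t$ is $C^\infty$-close to the unscaled sphere $S_{R_t}(O_\star)$ with $R_t \sim c\,e^{t/(n-1)}$. Substituting into $Q(t)$, the prefactor $e^{-(n-2)t/(n-1)}$ absorbs the $R_t^{n-2}$ growth and produces
\[
\lim_{t\to\infty} Q(t) = -|\mS^{n-1}|\, c^{n-2}\, |O_\star|^2 \le 0.
\]

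For the equality case in \eqref{ineq: main}, $Q(0) = 0$ combined with monotonicity and $\lim_{t\to\infty} Q(t) \le 0$ forces $Q(t) \equiv 0$ on $[0,\infty)$, so by the rigidity clause of Theorem \ref{thm: monotone} every $\Sigma_t$, and hence $\Sigma$ itself, is a round sphere; the sphere identity above then forces its center to coincide with $O$.

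The main obstacle is the asymptotic step. The two summands $n\V(\Omega_t)$ and $\frac{1}{n-1}\int_{\Sigma_t} r^2 H \,d\mu$ are individually of order $e^{nt/(n-1)}$; their leading contributions cancel, and $\lim_{t\to\infty} Q(t)$ is extracted from a subleading correction of order $e^{(n-2)t/(n-1)}$ that encodes the eccentricity of $\Sigma_t$ relative to $O$. Isolating this correction requires convergence of the rescaled flow fast enough to control both $H$ and $d\mu$ beyond leading order, not just bare $C^0$-closeness; this is exactly what the smooth convergence statement in the Gerhardt--Urbas theory supplies.
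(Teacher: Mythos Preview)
Your approach differs from the paper's, and the asymptotic step contains a genuine gap that the authors themselves flag as the reason for taking another route.

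You try to show $Q(0)\le 0$ by computing $\lim_{t\to\infty}Q(t)$. The difficulty is that each of the two summands $n\,\V(\Omega_t)$ and $\frac{1}{n-1}\int_{\Sigma_t}r^2H\,d\mu$ is individually of order $e^{nt/(n-1)}$; after the prefactor $e^{-(n-2)t/(n-1)}$ they are each of order $e^{2t/(n-1)}$, and your claimed limit is the residue left after two orders of cancellation. Bare $C^\infty$ convergence of the rescaled surfaces to a sphere does \emph{not} suffice for this: one needs the rescaled error to decay faster than $e^{-2t/(n-1)}$ in a norm controlling $H$ and $d\mu$, and neither Theorem~\ref{thm: Gerhardt and Urbas} as quoted nor your argument supplies such a rate. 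The translation mode already illustrates the problem: for $\Sigma_t=S_{R_t}(p)$ one has $e^{-t/(n-1)}\Sigma_t=S_R(e^{-t/(n-1)}p)\to S_R(0)$ only at rate $e^{-t/(n-1)}$, yet this slowly decaying mode carries the entire value $-|\mS^{n-1}|R^{n-2}|p|^2$ of $Q$. Thus $\lim Q(t)$ depends on subleading data (an ``asymptotic center'') that is not determined by the limit sphere alone, and your formula $\lim Q(t)=-|\mS^{n-1}|c^{n-2}|O_\star|^2$ presupposes both the existence of such a center and a quantitative convergence rate isolating it, neither of which you establish. The paper states explicitly that the authors ``do not know'' how to control this limit, and this is why they avoid it.

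The paper's proof instead first establishes \eqref{ineq: main} directly for \emph{convex} $\Sigma$ (Proposition~\ref{prop: convex}), by applying Reilly's formula \eqref{eq-Reilly} to $u=r^2$ on $\Omega$: convexity makes $\int_\Sigma\Pi(\nabla^\Sigma u,\nabla^\Sigma u)\,d\mu\ge 0$, which yields $n(n-1)\V(\Omega)\le\int_\Sigma Hr^2\,d\mu$, with equality forcing $\nabla^\Sigma(r^2)=0$. For a general star-shaped $\Sigma$, Theorem~\ref{thm: Gerhardt and Urbas} guarantees that $\Sigma_t$ becomes convex at some \emph{finite} time $T$; Proposition~\ref{prop: convex} gives $Q(T)\le 0$, and monotonicity (Theorem~\ref{thm: monotone}) then yields $Q(0)\le Q(T)\le 0$. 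No asymptotic analysis is needed. The equality case follows the same pattern: $Q(0)=0$ forces $Q\equiv 0$, so each $\Sigma_t$ is a sphere by Theorem~\ref{thm: monotone}, and then Proposition~\ref{prop: convex} (applied at large $t$) pins the center at $O$.
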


We give some remarks about Theorem \ref{thm: monotone} and Theorem \ref{thm: 2}.
The discovery of the monotonicity  of $Q(t) $ in Theorem \ref{thm: monotone}
is  motivated by the recent work of Brendle, Hung and Wang in \cite[Section 5]{BHW}.
To prove Theorem \ref{thm: monotone}, we also need a result due to Ros \cite{ros1987compact}
which was proved using Reilly's formula \cite{reilly1977applications}.
Having known $ Q(t)$ is monotone increasing,  to prove Theorem \ref{thm: 2}, it may be  attempting
 to ask whether
 $ \lim_{ t \rightarrow \infty} Q(t) = 0 $. 
We do not know if this is true because both $ \V(\Omega_t) $ and $ \int_{\Sigma_t} r^2 H d \mu $
grow like $ e^{\frac{n}{n-1} t} $ when  $\{  \Sigma_t  \} $ are spheres while
there is only a factor of $ e^{- \frac{n-2}{n-1} t}$ in
\eqref{eq-claim-Rn}.
Instead,
we take an alternate approach by first proving  Theorem \ref{thm: 2}  for a convex hypersurface $ \Sigma$.
The proof in that case again makes use of Reilly's formula.
When $\Sigma$ is merely assumed to be mean convex and star-shaped, we prove Theorem \ref{thm: 2} by reducing it to
the convex case using solutions to the inverse mean curvature flow provided by  the works of
Gerhardt \cite{gerhardt1990flow} and  Urbas \cite{urbas1990expansion}.
%%%%%%%%%%%%%%%%%%%%%
%
% Remarks added after submission
%
%%%%%%%%%%%%%%%%%%%%
If  a stronger result of Huisken and Ilmanen in \cite{HI08} is applied, Theorem \ref{thm: 2} indeed can be  shown to hold
for star-shaped surfaces with nonnegative mean curvature. We will discuss this case in the end.

\section{Proof of the Theorems}

Given a compact Riemannian manifold  $(\Omega, g)$   with boundary $\Sigma $,
we recall that Reilly's formula \cite{reilly1977applications} asserts
\be \label{eq-Reilly}
\begin{split}
& \ \int_\Omega | \nabla^2 u |^2 + \la \nabla (\Delta u), \nabla u \ra + \Ric (\nabla u, \nabla u) \ d V  \\
= & \int_{\Sigma } ( \Delta u ) \frac{\p u}{\p \nu} - \Pi (\nabla^\Sigma u, \nabla^\Sigma u) - 2 (\Delta_\Sigma u) \frac{\p u}{\p \nu}
- H \lf( \frac{\p u}{\p \nu} \ri)^2 d \mu .
\end{split}
\ee
Here $u$ is a smooth function on $ \Omega$;  $ \nabla^2 $, $\Delta$ and $\nabla$ denote the Hessian,
the Laplacian and the gradient on $\Omega$; $\Delta_\Sigma$ and $\nabla^\Sigma$ denote
the Laplacian and  the gradient on $\Sigma$ ;
 $\nu$ is the unit outward normal vector to $ \Sigma$;  $\Pi $ and $H$  are the second fundamental form and the mean curvature
 of $\Sigma $ with respect to $\nu$; and $ \Ric $ is the Ricci curvature of $g$.

To prove Theorem \ref{thm: monotone}, we need a result of Ros \cite{ros1987compact},
which was proved by choosing $\Delta u = 1 $ on $ \Omega$ and $ u = 0 $ at $ \Sigma$ in the above  Reilly's formula.

\begin{thm}[Ros \cite{ros1987compact}] \label{thm: Ros}
Let $ (\Omega, g)$ be an $n$-dimensional compact Riemannian manifold with nonnegative Ricci curvature
with  boundary $\Sigma$.   Suppose $ \Sigma$ has positive mean curvature $H$,  then
\be \label{eq-V-1overH}
n \mathrm{Vol}(\Omega) \le (n-1)  \int_\Sigma  \frac{1}{H}  d \mu
\ee
and equality holds if and only if $(\Omega, g)$ is isometric to a round ball in $ \R^n$.
\end{thm}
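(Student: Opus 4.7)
The plan is to follow the approach already hinted at in the remarks preceding the statement: apply Reilly's formula \eqref{eq-Reilly} to the solution $u$ of the Dirichlet problem
\[
\Delta u = 1 \ \text{in} \ \Omega, \qquad u = 0 \ \text{on} \ \Sigma.
\]
Such a $u$ exists and is smooth by standard elliptic theory. Since $\Delta u$ is constant, the term $\langle \nabla(\Delta u), \nabla u\rangle$ on the left side of \eqref{eq-Reilly} vanishes, and $\Ric(\nabla u, \nabla u) \ge 0$ by the curvature hypothesis. Because $u \equiv 0$ on $\Sigma$, both the tangential gradient $\nabla^\Sigma u$ and the tangential Laplacian $\Delta_\Sigma u$ vanish on $\Sigma$, so only the first and last boundary terms survive on the right.

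After this simplification, I would bound the left side from below using the pointwise Newton inequality $|\nabla^2 u|^2 \ge \frac{(\Delta u)^2}{n} = \frac{1}{n}$, and rewrite the first boundary term via the divergence theorem as
\[
\int_\Sigma \frac{\partial u}{\partial \nu} \, d\mu = \int_\Omega \Delta u \, dV = \V(\Omega).
\]
Reilly's formula then reduces to
\[
\int_\Sigma H \lf( \frac{\p u}{\p \nu} \ri)^2 d\mu \le \frac{n-1}{n}\V(\Omega).
\]
The inequality \eqref{eq-V-1overH} should now follow by applying Cauchy--Schwarz to $\V(\Omega) = \int_\Sigma \sqrt{H}\,\frac{\p u}{\p \nu} \cdot \frac{1}{\sqrt{H}} \, d\mu$, which gives
\[
\V(\Omega)^2 \le \lf(\int_\Sigma H \lf(\tfrac{\p u}{\p \nu}\ri)^2 d\mu\ri)\lf(\int_\Sigma \tfrac{1}{H} d\mu\ri) \le \tfrac{n-1}{n}\V(\Omega) \int_\Sigma \tfrac{1}{H} d\mu,
\]
and dividing by $\V(\Omega)$ yields the claim.

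The step I expect to require the most care is the equality case. Tracing through the argument, equality in \eqref{eq-V-1overH} forces equality in both the Newton inequality and the Cauchy--Schwarz step, together with $\Ric(\nabla u, \nabla u) = 0$. The first gives $\nabla^2 u = \frac{1}{n} g$ throughout $\Omega$, so $u$ has Hessian proportional to the metric. The Cauchy--Schwarz equality forces $H \frac{\p u}{\p \nu}$ to be constant along $\Sigma$, which, combined with $u = 0$ on $\Sigma$, pins down the boundary. The standard Obata-type rigidity for the equation $\nabla^2 u = \frac{1}{n} g$ on a manifold with boundary then identifies $(\Omega, g)$ isometrically with a Euclidean ball, with $u$ being (up to translation) the function $\frac{1}{2n}(|x|^2 - R^2)$; this is the portion I would write out most carefully. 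The converse is an immediate calculation on the round ball.
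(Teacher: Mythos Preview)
Your proposal is correct and follows exactly the approach the paper indicates (and attributes to Ros): apply Reilly's formula \eqref{eq-Reilly} with the Dirichlet solution $\Delta u = 1$, $u|_\Sigma = 0$, use $|\nabla^2 u|^2 \ge \tfrac{1}{n}(\Delta u)^2$ together with $\Ric \ge 0$ to obtain $\int_\Sigma H(\partial u/\partial \nu)^2\,d\mu \le \tfrac{n-1}{n}\V(\Omega)$, and then finish with Cauchy--Schwarz. The equality analysis via $\nabla^2 u = \tfrac{1}{n} g$ and the resulting Obata-type rigidity is also the standard route.
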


\begin{proof}  [Proof of Theorem \ref{thm: monotone}]
We use $'$ to denote differentiation w.r.t $t$.  Some basic formulas along
the inverse mean curvature flow \eqref{eq-IMF} in $ \R^n$ are
\begin{equation}\label{eq-evol}
H' =-\Delta_{\Sigma_t} \lf( \frac 1H \ri) -\frac {| \Pi|^2}H, \ \
d\mu'=d\mu, \ \
\V(\Omega_t)'= \int_{\Sigma_t}\frac 1H d\mu.
\end{equation}
Let $u= r^2$, then $u$ satisfies
\begin{equation}\label{eq-u}
  \nabla ^2 u = 2 g \textrm{\quad and \quad}\Delta u= 2n,
\end{equation}
where $g $ is the Euclidean metric.
Now
  \begin{equation}\label{eq-mono}
  \begin{split}
    \left(\int_{\Sigma_t} uH d\mu\right )'
        =& \int_{\Sigma_t}  (u'H + uH'+ uH)d\mu .
  \end{split}
  \end{equation}
  Let $ \la \cdot, \cdot \ra$ be the Euclidean inner product.
  By  \eqref{eq-evol}, \eqref{eq-u} and the divergence theorem, we have
\begin{equation}\label{eq-u'H}
\int_{\Sigma_t}u'H d\mu =  \int_{\Sigma _t}  \langle \nabla u, \frac{1}{H} \nu\rangle H d\mu
=  \int_{\Omega_t } \Delta u dV=2n \mathrm{Vol}(\Omega_t) .
\end{equation}
By \eqref{eq-u}, we also have
$$\Delta_{\Sigma_t} u = \Delta u - H \frac{\partial u }{\partial \nu}- \nabla ^2 u (\nu,\nu)= 2(n-1) -H\frac{\partial u}{\partial \nu}, $$
which together with \eqref{eq-evol} - \eqref{eq-u} implies
\begin{equation}\label{eq-uH'}
  \begin{split}
    \int_{\Sigma_t} u H' d\mu
    &= \int_{\Sigma_t}  \lf(- \frac{\Delta_{\Sigma_t}  u }{H}- \frac{u|\Pi|^2 }H \ri)d\mu\\
    & = \int_{\Sigma_t} \lf(-\frac {2(n-1)}H + \frac{\partial u}{\partial \nu} -  \frac{ u |\Pi|^2 }H \ri)d\mu\\
    &= - \int_{\Sigma_t} \frac {2(n-1)}H d\mu+2n \mathrm{Vol}(\Omega_t)-\int_{\Sigma_t}\frac{ u |\Pi|^2 }H d\mu.
  \end{split}
\end{equation}
Substituting  \eqref{eq-u'H} and \eqref{eq-uH'} into \eqref{eq-mono} yields
\begin{equation} \label{eq-combined}
  \begin{split}
    \left(\int_{\Sigma_t} uH d\mu\right )'
    &= 4n \mathrm{Vol}(\Omega_t ) +\int_{\Sigma_t} \lf[ -\frac { 2(n-1)}H - \frac{u|\Pi|^2} H +  uH \ri] d\mu\\
    &\le 4n \mathrm{Vol}(\Omega_t ) + \int_{\Sigma_t} \lf[ -\frac { 2(n-1)}H - \frac{uH }{n-1} +uH \ri] d\mu\\
    &= 4n \mathrm{Vol}(\Omega_t ) +\int_{\Sigma_t} \lf[  -\frac { 2(n-1)}H +\frac{n-2 }{n-1} uH \ri] d\mu\\
    &\le 4n \mathrm{Vol}(\Omega_t ) -2n \mathrm{Vol}(\Omega_t)+\frac{n-2 }{n-1}\int_{\Sigma_t}  uH d\mu\\
    &= 2n \mathrm{Vol}(\Omega_t ) +\frac{n-2 }{n-1}\int_{\Sigma_t}  uH d\mu
  \end{split}
\end{equation}
where we have used $|\Pi|^2\ge \frac{1}{n-1} H^2 $ in line 2 and Theorem \ref{thm: Ros} in line 4.
On the other hand, by  Theorem \ref{thm: Ros} again,  we have
\be \label{eq-dv}
 \mathrm{Vol}(\Omega _t)' = \int_{\Sigma _t} \frac 1H d\mu\ge \frac n{n-1}\mathrm{Vol}(\Omega_t).
 \ee
 It follows from \eqref{eq-combined} and \eqref{eq-dv} that
\begin{equation*}
  \begin{split}
    \left[ n(n-1)\mathrm{Vol}(\Omega _t) - \int_{\Sigma _t}uH d\mu\right]'
    \ge \frac{n-2}{n-1}\left[ n(n-1) \mathrm{Vol(\Omega_t)}- \int_{\Sigma _t}uH d\mu\right]
  \end{split}
\end{equation*}
or equivalently
\be
\left[ e^{-\frac{n-2}{n-1}t} \left(n \mathrm{Vol}(\Omega_t ) - \frac{1}{n-1}  \int_{\Sigma_t} r^2 H d\mu\right)\right]'\ge 0.
\ee
We conclude   that $ Q(t)$ is monotone increasing, moreover $ Q(t)$ is a constant function  if and only if equalities in
\eqref{eq-combined} and \eqref{eq-dv} hold. By Theorem \ref{thm: Ros}, we know these equalities hold  if and only if
$ \Sigma_t $ is a round sphere for all $t$.  This completes the proof of Theorem \ref{thm: monotone}.
\end{proof}

Next, we prove Theorem \ref{thm: 2} in the case that  $\Sigma $ is a convex hypersurface.

\begin{prop} \label{prop: convex}
 Let $\Sigma  $ be a smooth, closed, convex  hypersurface embedded in $\R^n$.
  Then
 \begin{equation}\label{ineq: main-convex}
  n \mathrm{Vol}(\Omega)\le \frac 1{n-1}\int_\Sigma r^2 H d \mu
  \end{equation}
where $ \V(\Omega)$ is the volume of the region $\Omega$ enclosed by $\Sigma $,
 $r $ is the distance  to a  fixed point $O $  and $H$ is the mean curvature of $ \Sigma$.
Moreover,  equality in \eqref{ineq: main-convex} holds if and only if $\Sigma$ is a sphere centered at $O$.
\end{prop}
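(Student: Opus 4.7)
The plan is to apply Reilly's formula \eqref{eq-Reilly} on $\Omega$ with the simple test function $u = r^2/2$. This $u$ enjoys particularly clean properties in Euclidean space: $\nabla u = X$ (the position vector from $O$), $\nabla^2 u = g$, $\Delta u = n$, $|\nabla^2 u|^2 = n$, and $\nabla(\Delta u) \equiv 0$. Since $\Ric \equiv 0$ on $\R^n$, the left-hand side of Reilly's identity collapses immediately to $n\,\V(\Omega)$.

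For the boundary integrand I would use $\p_\nu u = \la X,\nu\ra$, $\nabla^\Sigma u = X^T$, and the trace relation $\Delta_\Sigma u = \Delta u - \nabla^2 u(\nu,\nu) - H\p_\nu u = (n-1) - H\la X,\nu\ra$, which expresses all four terms on the RHS of \eqref{eq-Reilly} linearly and quadratically in $\la X,\nu\ra$, together with $\Pi(X^T, X^T)$. After expanding and using the divergence identity $\int_\Sigma \la X,\nu\ra\, d\mu = n\,\V(\Omega)$ to absorb the linear pieces, I expect the whole formula to reduce to the clean integral identity
\begin{equation*}
n(n-1)\,\V(\Omega) = \int_\Sigma H\la X,\nu\ra^2\, d\mu - \int_\Sigma \Pi(X^T, X^T)\, d\mu.
\end{equation*}
From here the inequality drops out in one line: convexity of $\Sigma$ gives $\Pi \ge 0$, so the second integral is non-negative and may be dropped, while the pointwise bound $\la X,\nu\ra^2 \le |X|^2 = r^2$ together with $H \ge 0$ yields $n(n-1)\,\V(\Omega) \le \int_\Sigma r^2 H\, d\mu$, which is precisely \eqref{ineq: main-convex}.

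The only step requiring real care is the rigidity. Equality forces both $\Pi(X^T, X^T) \equiv 0$ and $H(r^2 - \la X,\nu\ra^2) \equiv 0$ on $\Sigma$. On the open dense subset of strictly convex points of $\Sigma$, where $\Pi$ is positive definite, the first condition gives $X^T = 0$; continuity then propagates $X^T \equiv 0$ to all of $\Sigma$, forcing $r^2$ to be constant on the connected hypersurface, and hence $\Sigma$ must be a sphere centered at $O$. I do not anticipate any serious obstacle: the entire argument hinges on the single observation that $u = r^2/2$ has constant Hessian, after which the calculation is pure bookkeeping and convexity enters only through the one sign $\Pi(X^T, X^T) \ge 0$.
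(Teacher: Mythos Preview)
Your proposal is correct and essentially identical to the paper's proof: the paper also applies Reilly's formula with $u = r^2$ (your $u = r^2/2$ differs only by an immaterial constant), arrives at the same identity $\int_\Sigma H\langle X,\nu\rangle^2\,d\mu = \int_\Sigma \Pi(X^T,X^T)\,d\mu + n(n-1)\,\mathrm{Vol}(\Omega)$, and finishes by dropping the nonnegative $\Pi$-term and using $\langle X,\nu\rangle^2 \le r^2$. For rigidity the paper simply takes ``convex'' to mean that $\Pi$ is positive definite, so $\Pi(\nabla^\Sigma u,\nabla^\Sigma u)=0$ immediately yields $\nabla^\Sigma u=0$; your density-of-strictly-convex-points argument is unnecessary under that convention (and, as stated, would not be valid for merely $\Pi\ge 0$, since a smoothed convex body with a flat face shows the strictly convex points need not be dense).
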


\begin{remark}
Proposition  \ref{prop: convex} generalizes an  inequality of the first author  in \cite[Theorem 3.2 (1)]{Kwong12}.
\end{remark}

\begin{proof}
Apply Reilly's formula \eqref{eq-Reilly} to the Euclidean region $\Omega$ and choose
$ \displaystyle u = r^2 $, we have
$$
 4 n ( n -1)  \V(\Omega) =  \int_{\Sigma }   \Pi (\nabla^\Sigma u, \nabla^\Sigma u) + 2 (\Delta_\Sigma u) \frac{\p u}{\p \nu}
+ H \lf( \frac{\p u}{\p \nu} \ri)^2 d \mu
$$
where
$$
\Delta_\Sigma u = \Delta u - H \frac{\p u}{\p \nu} - \nabla^2 u (\nu, \nu)
= 2 (n -1)  - H \frac{\p u}{\p \nu}.
$$
Therefore,
\be \label{eq-Reilly-Euclidean-1}
 \int_\Sigma H \lf( \frac{\p u}{\p \nu} \ri)^2 d \mu   =  \int_{\Sigma }   \Pi (\nabla^\Sigma u, \nabla^\Sigma u) d \mu + 4n ( n-1)  \V(\Omega)  .
\ee
Since $ \Sigma$ is convex, $ \Pi (\cdot, \cdot)$ is positive definite. Hence, \eqref{eq-Reilly-Euclidean-1} implies
\be
  n (n-1) \V(\Omega) \le  \frac14 \int_\Sigma H \la \nabla (r^2), \nu \ra^2  d \mu \le  \int_\Sigma H r^2 d \mu .
\ee
When $ \displaystyle  n (n-1) \V(\Omega)  =    \int_\Sigma H r^2 d \mu$, we must have  $ \Pi(\nabla^\Sigma u, \nabla^\Sigma u) = 0 $, hence
$ \nabla^\Sigma u = 0 $. This implies  that $ u = r^2 $ is a constant on $ \Sigma$, which shows that $ \Sigma$ is a sphere
centered at $ O$.
\end{proof}

To deform a star-shaped hypersurface  to a convex hypersurface through the inverse mean curvature flow, we make use of
a special case of a general result of Gerhardt \cite{gerhardt1990flow} and   Urbas \cite{urbas1990expansion}.

 \begin{thm} [Gerhardt \cite{gerhardt1990flow} and Urbas \cite{urbas1990expansion}] \label{thm: Gerhardt and Urbas}
 Let $\Sigma $ be a smooth, closed hypersurface in $ \R^{n}$ with positive mean curvature, given by
 a smooth embedding $X_0 : \mS^{n-1} \rightarrow \R^{n} $.
 Suppose $ \Sigma$ is star-shaped with respect to a  point $ P $.
  Then the initial value problem
 \begin{equation} \label{eq-initial}
 \left\{
 \begin{split}
 \frac{ \p X}{ \p t}  = & \ \frac{1}{ H } \nu  \\
 X  (\cdot, 0) = & X_0 ( \cdot)
 \end{split}
 \right.
 \end{equation}
 has a unique smooth solution $ X : \mS^{n-1} \times [0, \infty) \rightarrow \R^{n} $, where
 $ \nu $ is the unit outer normal vector  to $\Sigma_t = X (\mS^{n-1}, t) $  and
 $H$ is the mean curvature of $\Sigma_t$.
 Moreover,  $\Sigma_t$ is  star-shaped with respect to  $P$  and
 the rescaled hypersurface  $ \widetilde{\Sigma_t }$, parametrized by $ \widetilde{X}(\cdot , t) =
  e^{- \frac{t}{n-1} } X( \cdot, t)   $, converges to a sphere  centered at $P$ in the $ \mathcal{C}^\infty$ topology
 as $ t \rightarrow \infty$.
 \end{thm}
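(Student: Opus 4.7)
The plan is to follow the scheme of Gerhardt and Urbas: reduce the flow to a single scalar parabolic PDE for the radial graph function, establish a priori $C^0$, $C^1$, and $C^2$ estimates uniform in time, apply standard parabolic regularity to extend the flow for all $t\ge 0$, and finally analyze the rescaled flow to obtain round convergence. Taking $P$ as the origin, star-shapedness lets us write $\Sigma_t$ as a graph $X(\theta,t)=\rho(\theta,t)\theta$ over $\mS^{n-1}$; a direct computation converts \eqref{eq-initial} into a quasilinear scalar parabolic equation of the form $\p_t \rho = F(\rho,\nabla\rho,\nabla^2\rho)$, where $F$ depends on the mean curvature of a radial graph. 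Short-time existence is then classical.

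For the a priori estimates, I would first obtain $C^0$ bounds by sphere comparison: concentric spheres that initially contain, or are contained in, $\Sigma_0$ remain barriers under IMCF and satisfy $\rho(t)=\rho(0)e^{t/(n-1)}$, so $\rho_{\min}(0)e^{t/(n-1)}\le \rho(\theta,t)\le \rho_{\max}(0)e^{t/(n-1)}$. Star-shapedness is preserved via the maximum principle applied to $\langle X,\nu\rangle$ along the flow, which simultaneously gives a uniform $C^1$ bound on $\log\rho$ after rescaling. The main analytic difficulty is the $C^2$ estimate, i.e.\ bounding $H$ above and, crucially, bounding $H$ away from zero. For this one computes the evolution equations of $H$, of the support function $\langle X,\nu\rangle$, and of $|X|^2$ under IMCF, then combines them to produce pointwise parabolic inequalities for quantities like $\frac{\langle X,\nu\rangle}{H}$ and $\frac{H}{\langle X,\nu\rangle}$, from which the maximum principle yields two-sided bounds for $H$ in terms of the $C^1$ data. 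With $H$ bounded between positive constants, the PDE becomes uniformly parabolic, and Krylov--Safonov plus Schauder theory provide $C^{k,\alpha}$ bounds for all $k$, giving long-time existence.

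The last step is the convergence of the rescaled immersion $\widetilde{X}=e^{-t/(n-1)}X$ to a round sphere. Here I would work with the rescaled radial function $\tilde\rho=e^{-t/(n-1)}\rho$ and show that the oscillation $\tilde\rho_{\max}(t)-\tilde\rho_{\min}(t)$ decays exponentially. A standard way is to track quantities measuring how far $\Sigma_t$ is from the enclosing concentric sphere, such as $H\langle X,\nu\rangle -(n-1)$ or $\rho_{\max}(t)/\rho_{\min}(t)$, and use the evolution equations together with the uniform $C^2$ bounds to show these decay like $e^{-ct}$. Once exponential decay of the oscillation is established in $C^0$, interpolation with the uniform higher-order bounds upgrades the convergence to $\mathcal{C}^\infty$, and the limit of $\tilde\rho$ is necessarily constant in $\theta$, i.e.\ a sphere centered at $P$.

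The main obstacle is unquestionably the two-sided $C^2$ estimate on $H$: the lower bound on $H$ is what prevents the parabolic PDE from degenerating, and it requires the right auxiliary test function combined with the structural identities for radial graphs. The convergence step is conceptually clean once uniform parabolicity and smooth bounds are in hand, but extracting the exponential rate also rests on the same curvature pinching.
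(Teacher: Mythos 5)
The paper does not prove this theorem: it is imported verbatim (as a special case) from Gerhardt \cite{gerhardt1990flow} and Urbas \cite{urbas1990expansion}, so there is no in-paper argument to compare your sketch against. With that caveat, your outline is a faithful high-level summary of the strategy actually used in those references: reduce to a scalar parabolic equation for the radial function $\rho$ over $\mS^{n-1}$, obtain $C^0$ control by sphere barriers that evolve as $e^{t/(n-1)}$, a $C^1$ (equivalently, lower bound on the support function $\langle X,\nu\rangle$, hence preservation of star-shapedness) estimate by the maximum principle, two-sided bounds on $H$ via evolution inequalities for combinations such as $\langle X,\nu\rangle/H$, then Krylov--Safonov and Schauder to bootstrap to $C^\infty$ and extend for all time, and finally decay of the rescaled oscillation plus interpolation to get smooth convergence to a round sphere.

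Two small points worth flagging if you were to turn the sketch into a full proof. First, the conclusion that the limiting sphere is centered at $P$ and that the full family (not just a subsequence) converges requires a normalization argument: the oscillation of $\tilde\rho = e^{-t/(n-1)}\rho$ going to zero together with the two-sided $C^0$ bounds only yields subsequential convergence to constants a priori; one pins down the limiting radius using a conserved/monotone normalization (e.g.\ the renormalized enclosed volume $e^{-nt/(n-1)}\V(\Omega_t)$, which is constant along IMCF). Second, the exponential rate you assert for $\tilde\rho_{\max}-\tilde\rho_{\min}$ is indeed what Gerhardt and Urbas obtain, but it is not automatic from uniform parabolicity alone; it comes from a sharpened gradient or pinching estimate for the rescaled flow, which is part of the work in those papers. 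Neither point is a flaw in the sketch so much as a reminder of where the real effort sits, and in the present paper both are simply inherited by citation.
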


Now we can complete the proof of Theorem \ref{thm: 2}.

\begin{proof}[Proof of Theorem \ref{thm: 2}]
By Theorem  \ref{thm: Gerhardt and Urbas},  there exists  a smooth solution $\{ \Sigma_t \}$
 to the inverse mean curvature flow with initial condition $ \Sigma$. Moreover,
the rescaled hypersurface $ \widetilde \Sigma_t = \{ e^{-\frac t{n-1}}x \ | \ x \in \Sigma_t \} $   converges exponentially fast
in the $C^\infty$ topology  to a sphere. In particular, $\widetilde \Sigma_t$ and hence $\Sigma_t$, must be convex for large  $t$.

Let $ T $ be a time when $ \Sigma_{T}$  becomes  convex.
By Proposition \ref{prop: convex}, we have
   \begin{equation*}
     n \mathrm{Vol}(\Omega _T) \le \frac{1}{n-1} \int_{\Sigma_T} r^2 H  d \mu,
   \end{equation*}
i.e.  $ Q(T) \le 0 $.
By Theorem  \ref{thm: monotone}, we know   $Q(t)$ is monotone increasing, hence
$$
Q(0) \le Q(T) \le 0
$$
which proves  \eqref{ineq: main}.

If  the equality in \eqref{ineq: main}  holds, then
$ Q(0) = 0 $. It follows from the monotonicity of $ Q(t)$ and the fact  $ Q(t) \le 0 $ for large $t$ that
$$Q(t) = 0, \ \forall \ t .$$
By Theorem \ref{thm: monotone}, this implies  that $ \Sigma_t $ is a sphere for each $t$.
By Proposition \ref{prop: convex}, $\Sigma_t$ is  a sphere centered at $O$ for large  $t$. Therefore,
we conclude that  the initial hypersurface $ \Sigma$ is a sphere centered at $O$.

\end{proof}

%%%%%%%%%%%%%%%%%%%%%%
%
%  remarks added on 11/15/12
%
%%%%%%%%%%%%%%%%%%%%%%

%\vspace{.2cm}

%\noindent {\em Remark}: By the result in [Huisken-Ilmanen, 2008 JDG]. Theorem 2 can be strengthen to smooth star-shaped
%surface with $H \ge 0$, including the rigidity statement.

\section{The case of nonnegative mean curvature}

Suppose $ \Sigma  $ is a  star-shaped hypersurface with nonnegative mean curvature in $ \R^n$.
By approximating $ \Sigma$ with  star-shaped hypersurfaces with positive mean curvature, it is not hard to see that
the inequality  \eqref{ineq: main} still holds for $ \Sigma$.
(For instance, such an approximation can be provided by the short time solution to  the mean curvature flow with initial condition $ \Sigma $.)

To see that the  rigidity part of \eqref{ineq: main} also holds for such a $ \Sigma$, we resort to a result of Huisken and Ilmanen in \cite[Theorem 2.5]{HI08}:

\begin{thm} [Huisken and Ilmanen \cite{HI08}] \label{thm: HI08}
 Let $X_0 : \mS^{n-1} \rightarrow \R^{n} $ be an  embedding such that $ \Sigma = X_0 ( \mS^{n-1} ) $ is a $C^1$, star-shaped
 hypersurface with measurable, bounded, nonnegative weak mean curvature.
  Then
 \begin{equation} \label{eq-HI08}
 \frac{ \p X}{ \p t}  =  \ \frac{1}{ H } \nu
 \end{equation}
 has a  smooth solution $ X : \mS^{n-1} \times (0, \infty) \rightarrow \R^{n} $ such that as  $ t \rightarrow 0 + $, the hypersurface
 $ \Sigma_t = X (\mS^{n-1}, t ) $ converges to $ \Sigma$ uniformly in $ C^0$.
\end{thm}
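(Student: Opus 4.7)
The strategy is to parametrize $\Sigma$ as a radial graph $X_0(\omega)=\rho_0(\omega)\omega$ over $\mS^{n-1}$, under which the flow \eqref{eq-HI08} reduces to a scalar quasilinear parabolic equation of the form $\partial_t\rho=v/H$, where $v=\sqrt{1+|\nabla\rho|^2/\rho^2}$ and $H$ is the quasilinear elliptic mean-curvature operator applied to $\rho$. The rough initial data will then be approximated by smooth, strictly mean-convex data to which Theorem \ref{thm: Gerhardt and Urbas} already applies, and the desired smooth solution on $\mS^{n-1}\times(0,\infty)$ will be produced by passing to the limit, using uniform interior regularity estimates for smooth convergence away from $t=0$ and a comparison argument for uniform $C^0$ convergence down to $t=0$.

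The plan is to proceed in six steps. First, exploit the $C^1$ and star-shaped hypotheses to realize $\rho_0$ as a Lipschitz positive function on $\mS^{n-1}$ and derive the scalar PDE for $\rho$. Second, mollify $\rho_0$ on the sphere to obtain a family $\{\rho_0^\epsilon\}$ of smooth positive functions whose radial graphs are still star-shaped and, after a small outward shift to absorb the mollification error, strictly mean-convex, with $\rho_0^\epsilon\to\rho_0$ in $C^0$. Third, feed each $\rho_0^\epsilon$ into Theorem \ref{thm: Gerhardt and Urbas} to obtain a smooth global solution $\rho^\epsilon$. Fourth, establish a priori bounds \emph{independent of $\epsilon$}: $C^0$ bounds via comparison with the explicit spherical IMCF solutions; a Gerhardt-type gradient bound depending only on $\|\rho_0\|_{C^1}$ and $t$; and higher-order interior estimates on $\mS^{n-1}\times[\delta,T]$ for each $\delta,T>0$. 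Fifth, extract a subsequence of $\rho^\epsilon$ converging in $C^\infty_{\mathrm{loc}}$ on $(0,\infty)$ to a smooth solution $\rho$. Sixth, combine the spherical comparison, which pins $\rho(\cdot,t)$ to $\rho_0^\epsilon$ within an $O(t)$ error as $t\to 0$, with $\rho_0^\epsilon\to\rho_0$ in $C^0$ to conclude that $\rho(\cdot,t)\to\rho_0$ uniformly as $t\to 0^+$.

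The hard part is Step 4, specifically the $\epsilon$-independent interior higher-order regularity when the initial mean curvature is only nonnegative, bounded, and measurable. The equation $\partial_t\rho=v/H$ is strictly parabolic only where $H>0$, so no uniform Schauder estimates can be propagated from $t=0$; the whole argument depends on an instantaneous smoothing effect. The key to overcoming this should be to exploit parabolic smoothing together with the concavity properties of $1/H$ along the flow: a quantitative version of Urbas's gradient estimate, combined with Krylov--Safonov type interior H\"older estimates for quasilinear parabolic equations in non-divergence form, should produce a $C^{1,\alpha}$ bound for $\rho^\epsilon$ on $\mS^{n-1}\times[\delta,T]$ independent of $\epsilon$. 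Standard Schauder bootstrapping then yields uniform $C^{k,\alpha}$ bounds for all $k$, after which the remaining passages to the limit are routine.
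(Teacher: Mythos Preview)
The paper does not prove this statement: Theorem~\ref{thm: HI08} is quoted from \cite{HI08} without proof, so there is no argument in the paper to compare your proposal against. Your outline is broadly consistent with how Huisken and Ilmanen actually proceed (radial-graph reduction, approximation by strictly mean-convex data, uniform interior estimates, passage to the limit), and you correctly flag the crucial difficulty: obtaining $\epsilon$-independent interior bounds when the initial $H$ may vanish, so that the equation is not uniformly parabolic near $t=0$. What your sketch leaves genuinely unresolved is precisely this point. Krylov--Safonov estimates require a uniform ellipticity ratio, which in turn needs a quantitative interior \emph{lower} bound on $H$ depending only on the data and on $\delta>0$; without such a bound the bootstrap in Step~4 does not start. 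In \cite{HI08} this is handled by a specific smoothing estimate for $H$ along the flow (an evolution-equation/Harnack-type argument giving $H(\cdot,t)\ge c(\delta)>0$ for $t\ge\delta$), and that is the substantive content of their theorem rather than a routine application of standard parabolic theory. Your proposal would become a genuine proof once you supply that step; as written it identifies the obstacle but does not overcome it.
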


\begin{remark}
In the above theorem, if the initial surface $ \Sigma$ is assumed to be smooth, the same proof in \cite{HI08}
%(using the fact  $ \Sigma_t $ locally can be written as a graph of bounded gradient over $ \Sigma$ and the lower estimate on $H$ in \cite[Theorem 1.1]{HI08})
together with the upper estimate of $H$ for smooth solutions (c.f. \cite[(1.4)]{HI})
shows  that as $ t \rightarrow 0 +$, $ \Sigma_t $ converges to $ \Sigma$ in $W^{2,p}$ norm for any $ 1 < p <\infty$. 
On the other hand, by Theorem \ref{thm: Gerhardt and Urbas}, $\Sigma_t$ converges to a sphere in the $C^\infty$ topology after rescaling, as $t\to \infty$. In particular, $\Sigma_t$ is convex for large enough $t>0$.
\end{remark}

It follows from  Theorem \ref{thm: monotone}, Proposition \ref{prop: convex}, Theorem \ref{thm: HI08} and Remark 7 that

\begin{thm}\label{thm: nonnegative-H}
  Let $\Sigma  $ be a smooth,  star-shaped, closed hypersurface embedded in $\mathbb R^n$ with nonnegative mean curvature.
    Then
  \begin{equation}\label{ineq: nonnegative-H}
   n \mathrm{Vol}(\Omega)\le \frac 1{n-1}\int_\Sigma r^2 H d \mu
   \end{equation}
 where $ \V(\Omega)$ is the volume of the region $\Omega$ enclosed by $\Sigma $,
 $r $ is the distance  to a  fixed point $O $  and $H$ is the mean curvature of $ \Sigma$.
Furthermore,  equality in \eqref{ineq: main} holds if and only if $\Sigma$ is a sphere centered at $O$.
\end{thm}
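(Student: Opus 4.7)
The plan is to handle the inequality and the rigidity in \eqref{ineq: nonnegative-H} separately, since the inequality itself follows from Theorem \ref{thm: 2} by a straightforward approximation. To obtain \eqref{ineq: nonnegative-H}, I would approximate $\Sigma$ by a one-parameter family of smooth, star-shaped hypersurfaces $\Sigma^\epsilon$ with strictly positive mean curvature. The short-time solution to the mean curvature flow with initial data $\Sigma$ is a convenient source of such an approximation, since mean curvature becomes strictly positive instantly by the strong minimum principle, while star-shapedness and smoothness are preserved for a short time. Applying Theorem \ref{thm: 2} to $\Sigma^\epsilon$ and passing to the limit via the smooth convergence of $\Sigma^\epsilon$ to $\Sigma$ yields \eqref{ineq: nonnegative-H}.

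For the rigidity clause, the plan is to run the inverse mean curvature flow forward from $\Sigma$ using Theorem \ref{thm: HI08}, and then repeat the monotonicity argument of Theorem \ref{thm: 2} on the open time interval $(0,\infty)$. Suppose equality holds in \eqref{ineq: nonnegative-H}, i.e.\ $Q(0)=0$, where $Q$ is the quantity in \eqref{eq-claim-Rn} evaluated at $\Sigma$. Theorem \ref{thm: HI08} produces a smooth solution $\Sigma_t$ of \eqref{eq-HI08} on $(0,\infty)$, and Remark 7 tells us that $\Sigma_t \to \Sigma$ in $W^{2,p}$ for any $1<p<\infty$ as $t\to 0^+$, so that $\V(\Omega_t)\to \V(\Omega)$ and $\int_{\Sigma_t} r^2 H\,d\mu \to \int_\Sigma r^2 H\,d\mu$. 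In particular $Q(t)\to Q(0)=0$ as $t\to 0^+$. By Theorem \ref{thm: monotone}, $Q$ is monotone increasing on $(0,\infty)$, and by Remark 7 together with Proposition \ref{prop: convex}, $\Sigma_t$ is convex for large $t$ and hence $Q(t)\le 0$ for all sufficiently large $t$. Combining these forces $Q(t)\equiv 0$ on $(0,\infty)$.

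Once $Q$ is constant on $(0,\infty)$, the equality case in Theorem \ref{thm: monotone} implies that $\Sigma_t$ is a round sphere for every $t>0$. For large $t$ this sphere is convex, so the rigidity statement in Proposition \ref{prop: convex} identifies its center as $O$. Because inverse mean curvature flow of a sphere preserves its center, $\Sigma_t$ must be a sphere centered at $O$ for every $t>0$, and passing to the $C^0$ limit as $t\to 0^+$ via Theorem \ref{thm: HI08} shows that $\Sigma$ itself is a sphere centered at $O$.

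The main obstacle, as I see it, is justifying the continuity $Q(t)\to Q(0)$ as $t\to 0^+$: since $\int_{\Sigma_t} r^2 H\,d\mu$ involves the mean curvature linearly but against a volume form that itself depends on the second fundamental form through the embedding, mere $C^0$ convergence of $\Sigma_t$ to $\Sigma$ would not suffice. The $W^{2,p}$ convergence from Remark 7 is precisely the regularity bridge needed, and its proof rests on the upper bound for $H$ available for smooth solutions of \eqref{eq-IMF} cited from \cite{HI}. Once this continuity is in hand, the argument reduces to the same monotonicity plus convex-rigidity combination used in the proof of Theorem \ref{thm: 2}.
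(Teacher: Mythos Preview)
Your proposal is correct and follows essentially the same route as the paper: approximation by mean curvature flow for the inequality, and for rigidity the Huisken--Ilmanen existence result (Theorem \ref{thm: HI08}) together with the $W^{2,p}$ convergence of Remark 7 to get continuity of $Q$ at $t=0^+$, then the monotonicity of $Q$ combined with Proposition \ref{prop: convex} at large times to force $Q\equiv 0$ and hence $\Sigma_t$ a sphere centered at $O$. One small slip in your exposition: the volume form $d\mu$ depends on the induced metric (first derivatives of the embedding), not on the second fundamental form; the need for $W^{2,p}$ convergence comes solely from the factor $H$ in the integrand.
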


%%%%%%%%%%%%%%%%%%%%
%
% Reference updated after submission
%
%
%%%%%%%%%%%%%%%%%%%%%

\end{document}